\begin{document}

\theoremstyle{plain}

\newtheorem{thm}{Theorem}

\newtheorem{lem}[thm]{Lemma}
\newtheorem{Problem B}[thm]{Problem B}

\newtheorem{pro}[thm]{Proposition}
\newtheorem{conj}[thm]{Conjecture}
\newtheorem{cor}[thm]{Corollary}
\newtheorem{que}[thm]{Question}
\newtheorem{prob}[thm]{Problem}
\newtheorem{rem}[thm]{Remark}
\newtheorem{defi}[thm]{Definition}

\newtheorem*{thmA}{Theorem A}
\newtheorem*{thmB}{Theorem B}
\newtheorem*{corB}{Corollary B}
\newtheorem*{thmC}{Theorem C}
\newtheorem*{thmD}{Theorem D}
\newtheorem*{thmE}{Theorem E}
 
\newtheorem*{thmAcl}{Theorem A}
\newtheorem*{thmBcl}{Theorem B}
\newcommand{\dd}{\mathrm{d}}

\newcommand{\Maxn}{\operatorname{Max_{\textbf{N}}}}
\newcommand{\Syl}{\operatorname{Syl}}
\newcommand{\Lin}{\operatorname{Lin}}
\newcommand{\U}{\mathbf{U}}
\newcommand{\R}{\mathbf{R}}
\newcommand{\dl}{\operatorname{dl}}
\newcommand{\Con}{\operatorname{Con}}
\newcommand{\cl}{\operatorname{cl}}
\newcommand{\Stab}{\operatorname{Stab}}
\newcommand{\Aut}{\operatorname{Aut}}
\newcommand{\Ker}{\operatorname{Ker}}
\newcommand{\InnDiag}{\operatorname{InnDiag}}
\newcommand{\fl}{\operatorname{fl}}
\newcommand{\Irr}{\operatorname{Irr}}
\newcommand{\FF}{\mathbb{F}}
\newcommand{\EE}{\mathbb{E}}
\newcommand{\normal}{\trianglelefteq}
\newcommand{\sn}{\normal\normal}
\newcommand{\Bl}{\mathrm{Bl}}
\newcommand{\NN}{\mathbb{N}}
\newcommand{\N}{\mathbf{N}}
\newcommand{\bfC}{\mathbf{C}}
\newcommand{\bfO}{\mathbf{O}}
\newcommand{\bfF}{\mathbf{F}}
\def\GGG{{\mathcal G}}
\def\HHH{{\mathcal H}}
\def\HH{{\mathcal H}}
\def\irra#1#2{{\rm Irr}_{#1}(#2)}

\renewcommand{\labelenumi}{\upshape (\roman{enumi})}

\newcommand{\PSL}{\operatorname{PSL}}
\newcommand{\PSU}{\operatorname{PSU}}
\newcommand{\alt}{\operatorname{Alt}}

\providecommand{\V}{\mathrm{V}}
\providecommand{\E}{\mathrm{E}}
\providecommand{\ir}{\mathrm{Irm_{rv}}}
\providecommand{\Irrr}{\mathrm{Irm_{rv}}}
\providecommand{\re}{\mathrm{Re}}

\numberwithin{equation}{section}
\def\irrp#1{{\rm Irr}_{p'}(#1)}

\def\ibrrp#1{{\rm IBr}_{\Bbb R, p'}(#1)}
\def\C{{\mathbb C}}
\def\Q{{\mathbb Q}}
\def\irr#1{{\rm Irr}(#1)}
\def\irrp#1{{\rm Irr}_{p^\prime}(#1)}
\def\irrq#1{{\rm Irr}_{q^\prime}(#1)}
\def \c#1{{\cal #1}}
\def \aut#1{{\rm Aut}(#1)}
\def\cent#1#2{{\bf C}_{#1}(#2)}
\def\norm#1#2{{\bf N}_{#1}(#2)}
\def\zent#1{{\bf Z}(#1)}
\def\syl#1#2{{\rm Syl}_#1(#2)}
\def\normal{\triangleleft\,}
\def\oh#1#2{{\bf O}_{#1}(#2)}
\def\Oh#1#2{{\bf O}^{#1}(#2)}
\def\det#1{{\rm det}(#1)}
\def\gal#1{{\rm Gal}(#1)}
\def\ker#1{{\rm ker}(#1)}
\def\normalm#1#2{{\bf N}_{#1}(#2)}
\def\alt#1{{\rm Alt}(#1)}
\def\iitem#1{\goodbreak\par\noindent{\bf #1}}
   \def \mod#1{\, {\rm mod} \, #1 \, }
\def\sbs{\subseteq}

\def\gc{{\bf GC}}
\def\ngc{{non-{\bf GC}}}
\def\ngcs{{non-{\bf GC}$^*$}}
\newcommand{\notd}{{\!\not{|}}}

\newcommand{\Z}{\mathbf{Z}}
\newcommand{\Out}{{\mathrm {Out}}}
\newcommand{\Mult}{{\mathrm {Mult}}}
\newcommand{\Inn}{{\mathrm {Inn}}}
\newcommand{\IBR}{{\mathrm {IBr}}}
\newcommand{\IBRL}{{\mathrm {IBr}}_{\ell}}
\newcommand{\IBRP}{{\mathrm {IBr}}_{p}}
\newcommand{\cd}{\mathrm{cd}}
\newcommand{\ord}{{\mathrm {ord}}}
\def\id{\mathop{\mathrm{ id}}\nolimits}
\renewcommand{\Im}{{\mathrm {Im}}}
\newcommand{\Ind}{{\mathrm {Ind}}}
\newcommand{\diag}{{\mathrm {diag}}}
\newcommand{\soc}{{\mathrm {soc}}}
\newcommand{\End}{{\mathrm {End}}}
\newcommand{\sol}{{\mathrm {sol}}}
\newcommand{\Hom}{{\mathrm {Hom}}}
\newcommand{\Mor}{{\mathrm {Mor}}}
\newcommand{\Mat}{{\mathrm {Mat}}}
\def\rank{\mathop{\mathrm{ rank}}\nolimits}
\newcommand{\Tr}{{\mathrm {Tr}}}
\newcommand{\tr}{{\mathrm {tr}}}
\newcommand{\Gal}{{\rm Gal}}
\newcommand{\Spec}{{\mathrm {Spec}}}
\newcommand{\ad}{{\mathrm {ad}}}
\newcommand{\Sym}{{\mathrm {Sym}}}
\newcommand{\Char}{{\mathrm {Char}}}
\newcommand{\pr}{{\mathrm {pr}}}
\newcommand{\rad}{{\mathrm {rad}}}
\newcommand{\abel}{{\mathrm {abel}}}
\newcommand{\PGL}{{\mathrm {PGL}}}
\newcommand{\PCSp}{{\mathrm {PCSp}}}
\newcommand{\PGU}{{\mathrm {PGU}}}
\newcommand{\codim}{{\mathrm {codim}}}
\newcommand{\ind}{{\mathrm {ind}}}
\newcommand{\Res}{{\mathrm {Res}}}
\newcommand{\Lie}{{\mathrm {Lie}}}
\newcommand{\Ext}{{\mathrm {Ext}}}
\newcommand{\Alt}{{\mathrm {Alt}}}
\newcommand{\AAA}{{\sf A}}
\newcommand{\SSS}{{\sf S}}
\newcommand{\DDD}{{\sf D}}
\newcommand{\QQQ}{{\sf Q}}
\newcommand{\CCC}{{\sf C}}
\newcommand{\SL}{{\mathrm {SL}}}
\newcommand{\Sp}{{\mathrm {Sp}}}
\newcommand{\PSp}{{\mathrm {PSp}}}
\newcommand{\SU}{{\mathrm {SU}}}
\newcommand{\GL}{{\mathrm {GL}}}
\newcommand{\GU}{{\mathrm {GU}}}
\newcommand{\Spin}{{\mathrm {Spin}}}
\newcommand{\CC}{{\mathbb C}}
\newcommand{\CB}{{\mathbf C}}
\newcommand{\RR}{{\mathbb R}}
\newcommand{\QQ}{{\mathbb Q}}
\newcommand{\ZZ}{{\mathbb Z}}
\newcommand{\bfN}{{\mathbf N}}
\newcommand{\bfZ}{{\mathbf Z}}
\newcommand{\PP}{{\mathbb P}}
\newcommand{\cG}{{\mathcal G}}
\newcommand{\OO}{\mathcal O}
\newcommand{\cH}{{\mathcal H}}
\newcommand{\cQ}{{\mathcal Q}}
\newcommand{\GA}{{\mathfrak G}}
\newcommand{\cT}{{\mathcal T}}
\newcommand{\cL}{{\mathcal L}}
\newcommand{\IBr}{\mathrm{IBr}}
\newcommand{\cS}{{\mathcal S}}
\newcommand{\cR}{{\mathcal R}}
\newcommand{\GCD}{\GC^{*}}
\newcommand{\TCD}{\TC^{*}}
\newcommand{\FD}{F^{*}}
\newcommand{\GD}{G^{*}}
\newcommand{\HD}{H^{*}}
\newcommand{\GCF}{\GC^{F}}
\newcommand{\TCF}{\TC^{F}}
\newcommand{\PCF}{\PC^{F}}
\newcommand{\GCDF}{(\GC^{*})^{F^{*}}}
\newcommand{\RGTT}{R^{\GC}_{\TC}(\theta)}
\newcommand{\RGTA}{R^{\GC}_{\TC}(1)}
\newcommand{\Om}{\Omega}
\newcommand{\eps}{\epsilon}
\newcommand{\varep}{\varepsilon}
\newcommand{\al}{\alpha}
\newcommand{\chis}{\chi_{s}}
\newcommand{\sigmad}{\sigma^{*}}
\newcommand{\PA}{\boldsymbol{\alpha}}
\newcommand{\gam}{\gamma}
\newcommand{\lam}{\lambda}
\newcommand{\la}{\langle}
\newcommand{\genf}{F^*}
\newcommand{\ra}{\rangle}
\newcommand{\hs}{\hat{s}}
\newcommand{\htt}{\hat{t}}
\newcommand{\tG}{\hat G}
\newcommand{\St}{\mathsf {St}}
\newcommand{\bfs}{\boldsymbol{s}}
\newcommand{\bfl}{\boldsymbol{\lambda}}
\newcommand{\tn}{\hspace{0.5mm}^{t}\hspace*{-0.2mm}}
\newcommand{\ta}{\hspace{0.5mm}^{2}\hspace*{-0.2mm}}
\newcommand{\tb}{\hspace{0.5mm}^{3}\hspace*{-0.2mm}}
\def\skipa{\vspace{-1.5mm} & \vspace{-1.5mm} & \vspace{-1.5mm}\\}
\newcommand{\tw}[1]{{}^#1\!}
\newcommand{\Irrg}[1]{\Irr_{p',\sigma}(#1)}
\renewcommand{\mod}{\bmod \,}

\marginparsep-0.5cm

\renewcommand{\thefootnote}{\fnsymbol{footnote}}
\footnotesep6.5pt

\thanks{This work was funded by the European Union - Next Generation EU, Missione 4 Componente 1, PRIN 2022- 2022PSTWLB - Group Theory and Applications, CUP B53D23009410006 as well as the Spanish Ministerio de Ciencia e Innovaci\'on (Grant  PID2022-137612NB-I00 funded by MCIN/AEI/ 10.13039/501100011033 and “ERDF A way of making Europe”). }

\title{Character degrees and local subgroups revisited}
\author{J. Miquel Mart\'inez}
%\date{}

\maketitle
\begin{abstract}{Let $p$ and $q$ be different primes and let $G$ be a finite $q$-solvable group. We prove that $\Irr_{p'}(G)\sbs\Irr_{q'}(G)$ if and only if $\norm G P\sbs\norm G Q$ and $\cent {Q'}P=1$ for some $P\in\Syl_p(G)$ and $Q\in\Syl_q(G)$. Further, if $B$ is a $q$-block of $G$ and $p$ does not divide the degree of any character in $\Irr(B)$ then we prove that a Sylow $p$-subgroup of $G$ is normalized by a defect group of $B$. This removes the $p$-solvability condition of two theorems of G. Navarro and T. R. Wolf.}
\end{abstract}

\section{Introduction}

Let $G$ be a finite group, and let $p$ and $q$ be different primes. We denote by $\Irr_{p'}(G)$ the set of irreducible characters of $G$ whose degree is not divisible by $p$. In this note we prove the following result.

\begin{thmAcl}
Let $G$ be a $q$-solvable group. Then $\Irr_{p'}(G)\sbs\Irr_{q'}(G)$ if and only if $\norm G P\sbs\norm G Q$ and $\cent {Q'}P=1$ for some $P\in\Syl_p(G)$ and $Q\in\Syl_q(G)$.
\end{thmAcl}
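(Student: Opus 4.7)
The plan is to argue both directions by induction on $|G|$, exploiting the normal subgroup structure provided by $q$-solvability and using the paper's block-theoretic companion result as the substitute for the $p$-solvability hypothesis used by Navarro--Wolf.

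For the $(\Leftarrow)$ direction, assume $\norm G P\sbs\norm G Q$ and $\cent{Q'}P=1$. Then $P\leq\norm G P\sbs\norm G Q$, so $P$ normalizes $Q$ and $PQ$ is a subgroup of $G$. I would first reduce to $\bfO_{q'}(G)=1$: in a $q$-solvable group, inflation from $G/\bfO_{q'}(G)$ preserves both $p'$- and $q'$-degrees, and the normalizer and centralizer hypotheses descend to the quotient. Under $\bfO_{q'}(G)=1$ the Hall--Higman lemma for $q$-solvable groups gives $\cent G{\bfO_q(G)}\leq\bfO_q(G)$, so Clifford theory over $\bfO_q(G)$ controls $\Irr(G)$ tightly. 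For $\chi\in\Irr_{p'}(G)$, combining Clifford theory with the coprime, fixed-point-free action of $P$ on $Q'$ coming from $\cent{Q'}P=1$ should force $q\nmid\chi(1)$, whence $\chi\in\Irr_{q'}(G)$.

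For the $(\Rightarrow)$ direction, assume $\Irr_{p'}(G)\sbs\Irr_{q'}(G)$. The central block-theoretic observation is that if $\chi\in\Irr_{p'}(G)$ lies in a $q$-block $B$ with defect group $D$, then $\chi\in\Irr_{q'}(G)$ forces $D$ to be a Sylow $q$-subgroup of $G$ and $\chi$ to be of height zero in $B$. In a minimal counterexample, this observation together with the paper's Theorem B (applied either directly to $G$ or, via Brauer correspondence, inside $\norm G P$) should yield that $Q$ normalizes some Sylow $p$-subgroup of $G$. Inductive reductions using $q$-solvability---passing to $G/\bfO_{q'}(G)$ and to proper subgroups such as $\norm G P$---together with character-counting across Brauer correspondents, are then expected to upgrade this to the full conclusion $\norm G P\sbs\norm G Q$ and $\cent{Q'}P=1$.

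The main obstacle I foresee is the final step of the forward direction: promoting ``$Q$ normalizes some Sylow $p$-subgroup'' to the stronger $Q\normal\norm G P$ and to the commutator condition $\cent{Q'}P=1$. This is precisely the point at which Navarro--Wolf invoked $p$-solvability (through Fong--Swan and Hall $p'$-subgroups), and replacing that input requires a careful combination of the paper's block-theoretic Theorem B with the local structure of $q$-solvable groups, handled by isolating a suitable $q$-block all of whose characters have $p'$-degree inside a well-chosen local subgroup.
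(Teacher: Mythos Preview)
Your plan diverges substantially from the paper's argument, and both directions contain genuine gaps.

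For $(\Leftarrow)$, your reduction ``pass to $G/\bfO_{q'}(G)$'' does not work: to prove $\Irr_{p'}(G)\subseteq\Irr_{q'}(G)$ you must handle \emph{every} $\chi\in\Irr_{p'}(G)$, and most such $\chi$ do not have $\bfO_{q'}(G)$ in their kernel, so knowing the statement for the quotient tells you nothing about them. The paper's reduction to $Q\normal G$ is not by quotienting but by character-counting bijections: Rizo's divisibility refinement of McKay for $q$-solvable groups (Theorem~\ref{thm:rizo}) gives $|\Irr_{\pi'}(G)|=|\Irr_{\pi'}(\norm G Q)|$, and the McKay conjecture for the prime $p$ (using $\norm G P=\norm{\norm G Q}P$) gives $|\Irr_{p'}(G)|=|\Irr_{p'}(\norm G Q)|$. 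These two equalities together transfer the containment question to $\norm G Q$, where $Q$ is normal and a direct Glauberman-correspondence argument finishes. None of these ingredients appear in your plan.

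For $(\Rightarrow)$, your intended use of Theorem~B is problematic on two counts. First, the hypothesis $\Irr_{p'}(G)\subseteq\Irr_{q'}(G)$ does not hand you a $q$-block all of whose irreducible characters have $p'$-degree, so it is unclear how Theorem~B even applies. Second, as you yourself note, Theorem~B only yields $Q\subseteq\norm G P$, which is the wrong containment; you offer no mechanism to upgrade this to $\norm G P\subseteq\norm G Q$. The paper bypasses both issues entirely: by Fong--Swan every $\varphi\in\IBr_{p'}(G)$ lifts to some $\chi\in\Irr_{p'}(G)\subseteq\Irr_{q'}(G)$, whence $\IBr_{p'}(G)\subseteq\IBr_{q'}(G)$, and then \cite[Theorem~A]{Bon-Nav-Riz-San22} gives $\norm G P\subseteq\norm G Q$ directly. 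The centralizer condition $\cent{Q'}P=1$ then falls out of Proposition~\ref{pro}. Theorem~B plays no role in the proof of Theorem~A.
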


Theorem A was proven for $\{p, q\}$-separable groups in \cite{Nav-Wol98} (by Burnside's $p^aq^b$ theorem, $\{p,q\}$-separable groups are exactly the groups that are both $p$-solvable and $q$-solvable). The authors point out that the $q$-solvability hypothesis is necessary (as in \cite[Example 10]{Nav-Wol98}, take $K=\PSL(2,3^5)$ and $a$ an automorphism of $K$ of order $5$, then $G=K\langle a \rangle$ with $p=5$ and $q=11$ shows that Theorem A does not hold for $p$-solvable groups). They speculate that the stronger version for $q$-solvable groups ``\emph{is heavily related to the validity of McKay's conjecture
and similar results}''. Here we confirm their speculation by using the recently proved McKay conjecture \cite{Cab-Spa}.

The main result of \cite{Bel-Nav00} gives a version of \cite{Nav-Wol98} for $q$-Brauer characters in $\{p, q\}$-separable groups. This was later relaxed to $q$-solvable groups in \cite{Bon-Nav-Riz-San22}, and we use this latter result to obtain Theorem A. We note that, unlike the results for $\{p, q\}$-separable groups, both \cite[Theorem A]{Bon-Nav-Riz-San22} and our Theorem A rely on the classification of finite simple groups.

We mention that, without any separability condition, it has been proven in \cite{Nav-Tie} that $\Irr_{p'}(G)=\Irr_{q'}(G)$ if and only if $\norm G Q=\norm G P$ for some $Q\in\Syl_q(G)$ and $P\in\Syl_p(G)$ and both $P$ and $Q$ are abelian. 

Focusing on blocks, we prove the following.

\begin{thmBcl}\label{thm:B}
Let $G$ be a $q$-solvable group and let $B$ be a $q$-block of $G$ with defect group $Q$. If $p$ does not divide $\chi(1)$ for all $\chi\in\Irr(B)$ then $Q\sbs\norm G P$ for some $P\in\Syl_p(G)$.
\end{thmBcl}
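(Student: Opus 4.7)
The plan is to induct on $|G|$ and reduce, through Fong's theorems on $q$-blocks of $q$-solvable groups, to a quotient whose irreducible characters are all of $p'$-degree, after which the Ito--Michler theorem finishes. Write $N=\bfO_{q'}(G)$. \emph{Base case} ($N=1$): Fong's classification puts $q$-blocks of a $q$-solvable group in bijection with $G$-orbits on $\Irr(N)$, so here there is exactly one block (the principal block), and $\Irr(B)=\Irr(G)$. The hypothesis becomes $\Irr(G)\sbs\Irr_{p'}(G)$; Ito--Michler yields a normal abelian Sylow $p$-subgroup $P$ of $G$, so $Q\sbs G=\norm G P$.

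\emph{Inductive step} ($N>1$): let $\theta\in\Irr(N)$ be covered by $B$ and $T=G_\theta$. Fong--Reynolds gives a $q$-block $b$ of $T$ with defect group $Q$, $b^G=B$, and $\chi\mapsto\chi^G$ is a bijection $\Irr(b)\to\Irr(B)$ scaling degrees by $[G:T]$. Hence $p\nmid[G:T]$ and $\Irr(b)\sbs\Irr_{p'}(T)$. If $T<G$, induction applied to $T$ produces $P_T\in\Syl_p(T)$ with $Q\sbs\norm T{P_T}$; since $p\nmid[G:T]$ we have $P_T\in\Syl_p(G)$. If $T=G$ (so $\theta$ is $G$-invariant), Fong's second reduction (Dade--Fong) yields a $q$-solvable central extension $\Gamma$ of $G/N$ by a cyclic $q'$-subgroup $Z\sbs\zent\Gamma$, a faithful $\lambda\in\Irr(Z)$ extending to some $\hat\lambda\in\Irr(\Gamma)$, and a block $\tilde b$ of $\Gamma$ with defect group $Q$ via a degree-scaling bijection $\Irr(b)\to\Irr(\tilde b,\lambda)$. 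Tensoring by $\hat\lambda^{-1}$ transports $\tilde b$ to a block $\bar b$ of $\Gamma/Z\cong G/N$ with defect group $Q$ and $\Irr(\bar b)\sbs\Irr_{p'}(G/N)$; induction on $|G/N|<|G|$ delivers $\bar P\in\Syl_p(G/N)$ with $Q\sbs\norm{G/N}{\bar P}$.

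Finally one lifts: choose $P\in\Syl_p(G)$ with $P\cap N\in\Syl_p(N)$, so that $PN/N=\bar P$. Then $QN$ normalizes $PN$, and since $P\in\Syl_p(PN)=\Syl_p(QPN)$ (as $|QPN|_p=|P|$), a Frattini argument applied to $P\trianglelefteq PN\trianglelefteq QPN$ gives that $\norm{QPN}P$ contains a Sylow $q$-subgroup of $QPN$ of order $|Q|$, so some $G$-conjugate of $Q$ lies in $\norm G P$, whence $Q\sbs\norm G{P'}$ for some $P'\in\Syl_p(G)$. The main technical obstacle is the second (Dade--Fong) reduction, where one must preserve the defect group $Q$ and arrange that $\lambda$ extends to $\Gamma$; that extension is exactly what makes the untwisting step preserve $p'$-degrees. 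Note that this plan appeals to Ito--Michler rather than Theorem A directly: the block-theoretic hypothesis is distilled by Fong's machinery into an ``all-$p'$-degree'' condition on a quotient, and Ito--Michler is the appropriate tool there, playing the role of a one-prime cousin of Theorem A.
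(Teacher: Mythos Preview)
The gap lies exactly where you flag ``the main technical obstacle'': you assert that in the Dade--Fong step the faithful $\lambda\in\Irr(Z)$ can be chosen to \emph{extend} to $\hat\lambda\in\Irr(\Gamma)$, and then you tensor by $\hat\lambda^{-1}$ to land in $\Irr(\Gamma/Z)$ and invoke Ito--Michler on $G/N$. But the character triple isomorphism carries the obstruction class of $\theta$ in $H^{2}(G/N,\CC^{\times})$ to that of $\lambda$, so $\lambda$ extends to $\Gamma$ if and only if $\theta$ extends to $G$, and nothing in the hypotheses forces this. One can only untwist the $p$-part and the $q$-part of $\lambda$ (because $\theta$ does extend to $NP$ and to $NQ$, as the $p'$-degree hypothesis and the coprime extension theorem give), which is precisely what the paper does in Step~1 of Theorem~4 to reduce to $Z$ of $\{p,q\}'$-order; the residual $\{p,q\}'$-part of the obstruction, however, can survive. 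If $\lambda$ really did extend, Gallagher would force \emph{every} $\beta\in\Irr(G/N)$ to have $p'$-degree, hence $G/N$ would have a normal abelian Sylow $p$-subgroup---a conclusion much stronger than Theorem~B and false in general under its hypotheses.

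This is not a detail one can patch: the whole point of the paper's route is that, once the untwisting is exhausted, one is left with a genuine \emph{relative} $p'$-degree condition $\Irr(G\mid\lambda)\sbs\Irr_{p'}(G)$ over a central $\lambda$ that need not extend, and the paper handles this by further reductions (to $\bfO_{q'}(G/Z)=1$, then to a normal elementary abelian Sylow $q$-subgroup) culminating in the Liebeck--Navarro--Praeger--Tiep theorem on relative $\pi'$-degrees. Your proposal, by contrast, tries to collapse the relative hypothesis to an absolute one via untwisting and then finish with Ito--Michler; that collapse is exactly what fails. The Fong--Reynolds reduction and the final lifting via coprime action are fine, but the middle step does not go through.
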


The statement of Theorem B was proven to hold for $\{p, q\}$-separable groups in \cite{Nav-Wol01} without using the classification, but our proof of Theorem B uses the classification. As mentioned in the introduction of \cite{Nav-Wol01}, the principal $2$-block of the sporadic Janko group $J_1$ for $p=2$ and $q=5$ shows that the conclusion of Theorem B does not hold for arbitrary finite groups, and requires some separability condition (although it has been proven recently in \cite[Theorem C]{Mor-Sch24} that there are remarkably few counterexamples for principal blocks of arbitrary finite groups). We have not found any $p$-solvable counterexamples to the conclusion of Theorem B for nonprincipal blocks.

We point out that, while Theorem A depends on the McKay conjecture being true, Theorem B depends on the McKay conjecture being proved \emph{in a certain way}, that is, proving that the inductive McKay conditions defined in \cite{Isa-Mal-Nav07} hold for certain simple groups (we do not need this explicitly here but it is used in the proof of \cite[Theorem A]{LNT}, on which our work relies). 

The divisibility of degrees of characters in a $q$-block by a different prime $p$ has become an active research topic in recent years \cite{Gia-Mal-Val19, Mal-Nav20, Wil, Gia-Mec24}. It remains open to describe group-theoretically when $p$ does not divide the degrees of all height zero characters in a $q$-block. For $\{p,q\}$-separable groups, and after Fong--Reynolds reductions such as the ones in the proof of Theorem B, this would already be a relative version of \cite[Theorem A]{Nav-Riz-San22} with $\pi=\{p\}'$ and $\rho=\{p, q\}'$ (by relative, we mean that the condition on degrees is satisfied only for the irreducible characters of $G$ lying over a character of a normal subgroup).

\subsection*{Acknowledgments} 

The author thanks Alexander Moret\'o, Gabriel Navarro, Noelia Rizo and Damiano Rossi for helpful conversations on the topic of this note. Furthermore, the author thanks Martin Liebeck, Cheryl Praeger and Pham Huu Tiep for communicating the results from \cite{LNT}. The author also wishes to thank the referee for their many useful comments and suggestions.

\section{Theorem A}

Theorem A is proven in two steps. First we assume the local condition $\norm G P\sbs \norm G Q$ and prove the result in this case. To complete the proof of Theorem A we use the Fong--Swan theorem and the main result of \cite{Bon-Nav-Riz-San22}. One of our main ingredients is the following refinement of the McKay conjecture for $q$-solvable groups, which relies on the classification of finite simple groups.

\begin{thm}\label{thm:rizo}
Let $G$ be a $q$-solvable group and $Q\in\Syl_q(G)$. Then there is a bijection
$$f:\Irr_{q'}(G)\rightarrow\Irr_{q'}(\norm G Q)$$
where $f(\chi)(1)$ divides $\chi(1)$ and $\chi(1)/f(\chi)(1)$ divides $|G:\norm G Q|$ for all $\chi\in\Irr_{q'}(G)$.
\end{thm}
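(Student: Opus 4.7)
The plan is to induct on $|G|$. When $Q \trianglelefteq G$ the identity map is the desired bijection, so assume $Q$ is not normal and let $N$ be a minimal normal subgroup. Since $G$ is $q$-solvable, $N$ is either a $q$-group or a $q'$-group; in the latter case, by CFSG, $N$ is either elementary abelian or a direct product of isomorphic non-abelian simple groups of order coprime to $q$.

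The core idea is a Clifford-theoretic reduction over $N$: partition $\Irr_{q'}(G)$ and $\Irr_{q'}(\norm G Q)$ by $G$-orbits of characters of $N$, and for each orbit representative $\theta$ apply the inductive hypothesis to the inertia subgroup $T = I_G(\theta)$. When $N$ is a $q'$-group, $QN/N$ is a Sylow $q$-subgroup of $G/N$ and $\norm{G}{Q} N/N \leq \norm{G/N}{QN/N}$, so one couples the inductive hypothesis applied to the Clifford correspondents with the Glauberman--Isaacs correspondence (in the elementary abelian case) or with the extendibility statements furnished by the inductive McKay conditions (in the non-abelian case). The divisibility requirements $f(\chi)(1) \mid \chi(1)$ and $\chi(1)/f(\chi)(1) \mid |G:\norm G Q|$ are tracked step by step using the factorization $\chi(1) = [G:T]\cdot\psi(1)$, where $\psi \in \Irr(T)$ is the Clifford correspondent of $\chi$ over $\theta$. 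When $N$ is a $q$-group, $N \leq Q$ and the restriction of any $q'$-degree character of $G$ to $N$ forces the covered $\theta \in \Irr(N)$ to be linear, which allows a reduction to a proper quotient.

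The main obstacle is the case where $N$ is a direct product of non-abelian simple groups of order prime to $q$ and $T = G$: here one cannot appeal to the inductive hypothesis on a proper subgroup and must produce a degree-compatible McKay bijection directly for the character triple $(G, N, \theta)$. This is exactly where the strengthened form of the inductive McKay conditions---verified for all relevant simple groups thanks to \cite{Cab-Spa}, and carrying the additional degree-divisibility information tracked in \cite{LNT}---enters the proof. Without that strengthened simple-group input, standard Clifford theory would yield only an equicardinal bijection, not one with the required divisibility properties; conversely, once that input is available, assembling the local bijections into a single global $f$ reduces to bookkeeping of the three factors of $\chi(1)$ in the Clifford decomposition.
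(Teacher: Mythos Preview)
The paper does not prove this theorem from scratch; its entire argument is a pointer to the literature: Turull \cite{Tur07} for solvable groups, Rizo \cite{Riz19} for $q$-solvable groups conditional on degree divisibility in the Glauberman correspondence, and Geck \cite{Gec20} (after the Hartley--Turull reduction \cite{Har-Tur94}) for that divisibility. So any comparison is to those sources, and the salient question is whether your sketch identifies the same deep input they do.

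It does not. Your inductive Clifford-theoretic outline over a minimal normal $N$ is the right shape, but you have misidentified the simple-group ingredient. When $N$ is a $q'$-group, abelian or not, the action of $Q$ on $N$ is coprime, so the Glauberman correspondence applies uniformly; there is no need to split into ``elementary abelian'' versus ``non-abelian simple'' cases, and the divisibility $f(\chi)(1)\mid \chi(1)$ with quotient dividing $|G:\norm G Q|$ comes precisely from degree divisibility in that correspondence. That is the content of \cite{Gec20} via \cite{Har-Tur94}, and it is exactly what the paper invokes. You instead appeal to the inductive McKay conditions of \cite{Cab-Spa} together with ``degree-divisibility information tracked in \cite{LNT}''. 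Neither reference supplies what you need here: the inductive McKay machinery yields equivariant bijections compatible with central characters, not the degree-divisibility statement required in this coprime-action step; and \cite{LNT} (which is Theorem~\ref{lnpt} in this paper) asserts the existence of abelian Hall $\pi$-subgroups under a relative-$\pi'$-degree hypothesis, and says nothing about constructing degree-compatible McKay bijections. Without Glauberman degree divisibility your argument has no mechanism to control $f(\chi)(1)$ in the non-abelian $q'$ step, so this is a genuine gap rather than an alternative route.

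A secondary issue: in the case where $N$ is a $q$-group, the observation that the constituents of $\chi_N$ are linear does not by itself ``allow a reduction to a proper quotient'', since a nontrivial linear $\theta\in\Irr(N)$ need not lie in $\ker\chi$. One has to pass through the stabilizer $G_\theta$ and argue via character-triple techniques (and check that $\norm{G_\theta}{Q}$ sits correctly inside $\norm G Q$), which your sketch does not address.
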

\begin{proof}
This is proved for solvable groups in  \cite{Tur07} and for $q$-solvable groups in \cite{Riz19} assuming the divisibility of degrees in the Glauberman correspondence, which was proved in \cite{Gec20} after a reduction to simple groups in \cite{Har-Tur94}.
\end{proof}

In this paper we write $\pi=\{p,q\}$, so $\Irr_{\pi'}(G)$ denotes the set of irreducible characters of $G$ whose degree is not divisible by $p$ nor $q$. 

\begin{pro}\label{pro}
Let $G$ be a $q$-solvable group, $P\in \Syl_p(G)$, $Q\in\Syl_q(G)$ and assume $\norm G P\sbs\norm G Q$. Then $\cent {Q'}P=1$ if and only if $\Irr_{p'}(G)\sbs\Irr_{q'}(G)$.
\end{pro}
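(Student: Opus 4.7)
The plan is to reduce to the subgroup $N:=\norm{G}{Q}$ and then argue inside $N$ via Clifford theory and the Glauberman correspondence. Since $\norm{G}{P}\sbs N$, we have $P\in\Syl_p(N)$ and $\norm{N}{P}=\norm{G}{P}$, so $|G:N|$ is coprime to both $p$ and $q$. The McKay conjecture \cite{Cab-Spa} applied to $G$ and to $N$ at the prime $p$ gives
\[ |\Irr_{p'}(G)|=|\Irr_{p'}(\norm{G}{P})|=|\Irr_{p'}(N)|, \]
and Theorem~\ref{thm:rizo} provides a bijection $f:\Irr_{q'}(G)\to\Irr_{q'}(N)$ whose degree-divisibility forces $f$ to restrict to a bijection $\Irr_{\pi'}(G)\to\Irr_{\pi'}(N)$ (since $|G:N|$ is a $\pi'$-number). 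Combining these, the inclusion $\Irr_{p'}(G)\sbs\Irr_{q'}(G)$ is equivalent to $\Irr_{p'}(N)\sbs\Irr_{q'}(N)$, so the problem reduces to the analogous statement inside $N$.

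Inside $N$ we have $Q\normal N$ and $N/Q$ a $q'$-group, so for any $\chi\in\Irr(N)$ lying over $\theta\in\Irr(Q)$, Clifford theory gives $\chi(1)_q=\theta(1)$; in particular $\chi\in\Irr_{q'}(N)$ iff $\theta$ is linear. The crux of the proof is the following key lemma: \emph{if $\cent{Q'}{P}=1$, then every $P$-invariant $\theta\in\Irr(Q)$ is linear.} I would prove it by examining $\theta|_{Q'}$: its irreducible constituents form a single $Q$-orbit of size dividing $[Q:Q']$, a $q$-power, so the $p$-group $P$ must fix some constituent $\varphi\in\Irr_P(Q')$. The Glauberman correspondence $\Irr_P(Q')\leftrightarrow\Irr(\cent{Q'}{P})=\{1\}$ then forces $\varphi=1_{Q'}$; since the trivial character is $Q$-invariant, the whole orbit is trivial and $Q'\sbs\ker\theta$.

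For ($\Leftarrow$), given $\chi\in\Irr_{p'}(N)$, the divisibility $p\nmid[N:I_N(\theta)]$ lets me replace $\theta$ by an $N$-conjugate that is $P$-invariant; the key lemma then makes $\theta$ linear, so $\chi\in\Irr_{q'}(N)$. For ($\Rightarrow$) I argue contrapositively: if $\cent{Q'}{P}\neq 1$, then comparing $|\Irr_P(Q)|=|\Irr(\cent{Q}{P})|$ (Glauberman on $Q$) with the count $|\cent{Q}{P}|/|\cent{Q'}{P}|$ of $P$-invariant linear characters of $Q$ yields a non-linear $P$-invariant $\theta\in\Irr(Q)$ (the strict inequality follows because some irreducible character of $\cent{Q}{P}$ must fail to have the nontrivial subgroup $\cent{Q'}{P}$ in its kernel). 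Setting $T:=I_N(\theta)\supseteq P$, the $T$-invariance of $\theta$ together with $\gcd(|Q|,[T:Q])=1$ let me extend $\theta$ to $\hat\theta\in\Irr(T)$ by a standard extension theorem for coprime normal subgroups; then $\chi:=\hat\theta^N\in\Irr(N)$ has degree $[N:T]\theta(1)$, coprime to $p$ but divisible by $q$, contradicting $\Irr_{p'}(N)\sbs\Irr_{q'}(N)$. The main technical obstacle is the key lemma; everything else is Clifford-theoretic bookkeeping once the two McKay-type bijections are in hand.
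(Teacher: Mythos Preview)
Your proof is correct and follows essentially the same route as the paper: the reduction to $N=\norm{G}{Q}$ via Theorem~\ref{thm:rizo} and the McKay conjecture is identical, and once $Q\normal G$ both arguments hinge on the Glauberman correspondence $\Irr_P(Q')\leftrightarrow\Irr(\cent{Q'}{P})$. The one difference worth noting is in the implication $\Irr_{p'}\subseteq\Irr_{q'}\Rightarrow\cent{Q'}{P}=1$: the paper bypasses your counting comparison on $\Irr_P(Q)$ by working directly with $Q'$---take any $\tau\in\Irr_P(Q')$, extend it to $PQ'$, induce to $G$ to obtain a character of $p'$-degree, and observe that any $p'$-degree irreducible constituent $\chi$ must lie in $\Irr_{q'}(G)=\Irr(G/Q')$, forcing $\tau=1_{Q'}$---which is a little shorter than constructing a nonlinear $P$-invariant $\theta\in\Irr(Q)$ and then extending to $I_N(\theta)$.
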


\begin{proof}
First we claim that $$\Irr_{p'}(G)\sbs\Irr_{q'}(G)$$ if and only if $$\Irr_{p'}(\norm G Q)\sbs\Irr_{q'}(\norm G Q).$$ Let $f:\Irr_{q'}(G)\rightarrow \Irr_{q'}(\norm G Q)$ be the bijection from Theorem \ref{thm:rizo}. By hypothesis, $p$ does not divide $|G:\norm G Q|$ and therefore $\chi\in\Irr_{q'}(G)$ has $p'$-degree if and only if $f(\chi)\in\Irr_{q'}(\norm G Q)$ has $p'$-degree. Thus $|\Irr_{\pi'}(G)|=|\Irr_{\pi'}(\norm G Q)|$. Now, by the McKay conjecture \cite{Cab-Spa}, $|\Irr_{p'}(G)|=|\Irr_{p'}(\norm G P)|=|\Irr_{p'}(\norm G Q)|$ because $\norm G P=\norm {\norm G Q} P$. Thus $\Irr_{p'}(G)\sbs\Irr_{q'}(G)$ if and only if $\Irr_{p'}(G)=\Irr_{\pi'}(G)$ if and only if  $\Irr_{p'}(\norm G Q)=\Irr_{\pi'}(\norm G Q)$ if and only if  $\Irr_{p'}(\norm G Q)\sbs\Irr_{q'}(\norm G Q)$, as desired. 

It follows that we may assume $Q\normal G$. It is easy to see that in this case $\Irr_{q'}(G)=\Irr(G/Q')$.  By the Glauberman correspondence \cite[Theorem 2.9]{Nav18}, $\cent{Q'}P=1$ if and only if $|\Irr_P(Q')|=1$, where $\Irr_P(Q')$ denotes the set of $P$-invariant characters of $Q'$.

Assume first that $\Irr_{p'}(G)\sbs\Irr_{q'}(G)$. Let $\tau\in\Irr_P(Q')$. We have that $\tau$ extends to $\mu\in\Irr(PQ')$ by \cite[Corollary 6.2]{Nav18}. Now $\mu^{G}(1)=\tau(1)|G:PQ'|$ is not divisible by $p$ and it follows that there is $\chi\in\Irr_{p'}(G)$ with $[\chi,\mu^G]\neq 0$, so $\chi_{Q'}$ contains $\tau$. By hypothesis, $\chi$ has $q'$-degree, so $Q'\sbs\ker\chi$ and therefore $\tau=1_{Q'}$. It follows that $|\Irr_{P}(Q')|=1$ and we are done.

Now assume $\cent {Q'}P=1$ so that $\Irr_P(Q')=1$. Then if $\chi\in\Irr_{p'}(G)$, by \cite[Lemma 9.3]{Nav18}, $\chi_{Q'}$ contains a $P$-invariant irreducible constituent. By hypothesis, $\chi_{Q'}$ contains $1_{Q'}$ so $\chi\in\Irr(G/Q')$ and we conclude that $\chi\in\Irr_{q'}(G)$, as desired.
\end{proof}

\textit{Proof of Theorem A.} By Proposition \ref{pro}, it suffices to show that $\Irr_{p'}(G)\sbs\Irr_{q'}(G)$ implies $\norm G P\sbs\norm G Q$ for some $P\in\Syl_p(G)$ and $Q\in\Syl_q(G)$. Now let $\varphi\in\IBr_{p'}(G)$ (where we are considering $q$-Brauer characters). By the Fong--Swan theorem \cite[Theorem 10.1]{Nav98} there is $\chi\in\Irr_{p'}(G)$ with $\chi^0=\varphi$. By hypothesis, $\chi\in\Irr_{q'}(G)$ and therefore $\varphi\in\IBr_{q'}(G)$. It follows that $\IBr_{p'}(G)\sbs\IBr_{q'}(G)$ so by \cite[Theorem A]{Bon-Nav-Riz-San22} we have $\norm G P\sbs\norm G Q$ for some $P\in\Syl_p(G)$ and $Q\in\Syl_q(G)$, and the result follows. \qed

\section{Theorem B}

This section is devoted to the proof of Theorem B. Our strategy is to prove a version of \cite[Theorem 3]{Nav-Wol01} and then apply Fong--Reynolds reductions. Our proof relies on the following result due to M. W. Liebeck, G. Navarro, C. E. Praeger and P. H. Tiep, which appeared in \cite{LNT}, and which uses the classification of finite simple groups.

\begin{thm}[Liebeck--Navarro--Praeger--Tiep]\label{lnpt}
Let $G$ be a finite group and $\pi$ a set of primes. Let $Z\normal G$ and $\lambda\in\Irr(Z)$. Assume that $\chi(1)/\lambda(1)$ is a $\pi'$-number for any $\chi\in\Irr(G|\lambda)$. If $\{5,7\}\not\subseteq \pi$ or $\{2,3\}\not\subseteq\pi'$ or $G$ does not involve $\AAA_7$ then $G/Z$ has abelian Hall $\pi$-subgroups.
\end{thm}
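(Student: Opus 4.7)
The plan is to reduce the relative hypothesis to the absolute statement ``every irreducible character has $\pi'$-degree'' by standard character-theoretic manoeuvres, and then derive the conclusion from a $\pi$-set Ito--Michler statement proved via the classification of finite simple groups.

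First I would pass to the inertia group $T=I_G(\lambda)$. The Clifford correspondence gives a degree-multiplying bijection $\Irr(T|\lambda)\to\Irr(G|\lambda)$ with multiplier $[G:T]$, so the hypothesis forces $[G:T]$ to be a $\pi'$-number, and every Hall $\pi$-subgroup of $G/Z$ is conjugate into $T/Z$. Hence it suffices to handle the case in which $\lambda$ is $G$-invariant. I would then invoke the reduction theorem for character triples to replace $(G,Z,\lambda)$ by an isomorphic triple $(\hat G,\hat Z,\hat\lambda)$ with $\hat Z$ central and $\hat\lambda$ extending linearly to $\hat G$. This preserves $G/Z\cong\hat G/\hat Z$ and every ratio $\chi(1)/\lambda(1)=\hat\chi(1)/\hat\lambda(1)$. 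Multiplying by the extension identifies $\Irr(\hat G|\hat\lambda)$ with $\Irr(\hat G/\hat Z)=\Irr(H)$, where $H:=G/Z$, so the hypothesis becomes simply that every $\chi\in\Irr(H)$ has $\pi'$-degree, and the conclusion becomes that $H$ has abelian Hall $\pi$-subgroups.

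I would prove this absolute version by induction on $|H|$ using the classification. Take a minimal normal subgroup $N$ of $H$. If $N$ is abelian, the hypothesis descends to $H/N$, and the inductive conclusion lifts together with Michler's theorem applied prime by prime to each $p\in\pi$. If $N=S_1\times\cdots\times S_k$ is a direct product of nonabelian simple groups, the task reduces to locating, in each factor $S_i$ outside the listed exceptions, an irreducible character of $\pi$-divisible degree that either extends to (via Gallagher) or induces irreducibly through the overgroup $NC_H(N)\leq H$, thereby contradicting the hypothesis; this is controlled by the embedding $H/NC_H(N)\hookrightarrow\Out(N)$ together with Ito-type extension criteria for characters of central products.

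The main obstacle is the resulting simple-group analysis. For alternating groups $\AAA_n$ one exhibits characters of $\pi$-divisible degree via the hook-length formula and Murnaghan--Nakayama combinatorics; the $\AAA_7$ exception reflects its unusually short degree list $\{1,6,10,14,14,15,21,35\}$ together with the atypical behaviour of its covering group $6.\AAA_7$ relative to $\pi=\{5,7\}$, where one cannot rule out all anomalous ratios. For groups of Lie type one invokes the Malle--Navarro--Tiep classification of simple groups lacking a character of degree divisible by a prescribed prime, combined with Lusztig's parametrization of semisimple and unipotent characters to locate $\pi$-divisible degrees; the conditions that $\{5,7\}\not\subseteq\pi$ or $\{2,3\}\not\subseteq\pi'$ are precisely what is needed to rule out the handful of small-rank exceptional families where no sufficiently rich supply of $\pi$-divisible degrees is available. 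Sporadic groups are dispatched by direct inspection of the ATLAS.
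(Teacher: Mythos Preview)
The paper does not prove this statement; it is imported without proof from \cite{LNT} (Liebeck, Navarro, Praeger and Tiep), so there is no argument in the paper to compare your proposal against.

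That said, your reduction step contains a genuine error. A character-triple isomorphism can replace $(G,Z,\lambda)$ by a triple $(\hat G,\hat Z,\hat\lambda)$ with $\hat Z$ central and $\hat\lambda$ linear and faithful, but it does \emph{not} in general arrange for $\hat\lambda$ to extend to $\hat G$: the obstruction is precisely the cohomology class in $H^2(G/Z,\CC^\times)$ determined by the triple, and that class need not vanish. Without an extension, Gallagher's theorem is unavailable and you cannot identify $\Irr(\hat G\mid\hat\lambda)$ with $\Irr(G/Z)$; the relative hypothesis on $\Irr(G\mid\lambda)$ therefore does not reduce to the absolute statement that every irreducible character of $G/Z$ has $\pi'$-degree.

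Indeed, the absolute statement is immediate from the classical It\^o--Michler theorem applied to each $p\in\pi$ separately (giving a normal abelian Sylow $p$-subgroup for every such $p$, whose product is then a normal abelian Hall $\pi$-subgroup), and it admits no exceptions whatsoever. The $\AAA_7$ exception in the theorem---which you yourself locate in the covering group $6.\AAA_7$---is therefore direct evidence that the relative problem cannot be collapsed to the absolute one: the genuine difficulty, and the simple-group analysis in \cite{LNT}, lives in the projective setting where the cocycle is nontrivial.
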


The following is our version of \cite[Theorem 3.1]{Nav-Wol01}.

\begin{thm}\label{thm:stronger result 2}
Let $G$ be a finite group, let $Z\normal G$ and $\lambda\in\Irr(Z)$ be $G$-invariant. Assume that $G/Z$ is $q$-solvable and that $\lambda$ extends to $Q$ where $Q/Z\in\Syl_q(G/Z)$. If $p$ does not divide $\chi(1)/\lambda(1)$ for all $\chi\in\Irr(G|\lambda)$ then a Sylow $q$-subgroup of $G/Z$ normalizes a Sylow $p$-subgroup of $G/Z$.
\end{thm}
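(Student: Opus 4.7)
My plan is to argue by induction on $|G|$, following the strategy of \cite[Theorem~3.1]{Nav-Wol01} (which treats $\{p,q\}$-separable groups) but using Theorem~\ref{lnpt} to replace their use of $p$-solvability.

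First I would reduce to a clean setup: quotient by $\ker \lambda$ (normal in $G$ since $\lambda$ is $G$-invariant) to assume $\lambda$ is faithful, then pass through an isomorphism of character triples in the style of \cite{Nav18} to assume $Z \subseteq \zent{G}$ is cyclic and $\lambda$ is linear. The extension of $\lambda$ to $Q$ and the degree hypothesis on $\Irr(G|\lambda)$ both survive these reductions, and $G/Z$ is unchanged.

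Next I would take a minimal normal subgroup $M/Z$ of $G/Z$ and attempt Clifford-theoretic descent. By $q$-solvability, $M/Z$ is either a $q$-group or a $q'$-group. If $M/Z$ is a $q$-group then $M \subseteq Q$, and $\lambda$ extends to $M$ by restricting $\tilde\lambda$; a coprime-action argument then produces a $G$-invariant extension $\mu$ of $\lambda$ to $M$, and the triple $(G, M, \mu)$ inherits the hypotheses (with $\tilde\lambda$ witnessing that $\mu$ extends to $Q$ and $\Irr(G|\mu) \subseteq \Irr(G|\lambda)$ supplying the degree condition), so the inductive hypothesis applies to the smaller quotient $G/M$. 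If $M/Z$ is a $q'$-group, I would analyze the $G$-action on the set of extensions of $\lambda$ to $M$: if some extension has proper stabilizer in $G$, Clifford correspondence reduces to the stabilizer and induction applies; otherwise the configuration is sharply constrained and either recurses or lands in the base case.

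The base case is where Theorem~\ref{lnpt} enters. The cleanest application is with $\pi = \{p, q\}$, which would yield abelian Hall $\{p,q\}$-subgroups of $G/Z$ and hence the desired normalization; however, this requires $\chi(1)/\lambda(1)$ to be a $\{p,q\}'$-number for all $\chi \in \Irr(G|\lambda)$, whereas our hypothesis only supplies the $p'$-part. The principal difficulty is therefore to show in the base case that the $q$-part of $\chi(1)/\lambda(1)$ is also trivial; I expect this to be forced by the full extension of $\lambda$ to the Sylow $q$-subgroup $Q$ together with the residual structure left after the inductive descent. It is essential to obtain the $\{p,q\}'$-upgrade: applying Theorem~\ref{lnpt} with $\pi = \{p\}$ alone yields only that $G/Z$ has abelian Sylow $p$-subgroups, which is insufficient — for instance, in $\AAA_4 \times \SSS_3$ with $p=2$, $q=3$ the Sylow $2$-subgroups are abelian yet no Sylow $3$-subgroup normalizes any Sylow $2$-subgroup. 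Thus the degree upgrade from $p'$ to $\{p,q\}'$ is the main technical hurdle.
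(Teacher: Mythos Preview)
Your overall plan mirrors the paper's strategy, and you have correctly located the crux: to invoke Theorem~\ref{lnpt} with $\pi=\{p,q\}$ you must upgrade the $p'$-degree hypothesis to a $\{p,q\}'$-degree hypothesis. But you leave this as an unresolved ``main technical hurdle'', and that is exactly where the proof lives. The paper's resolution is concrete and not supplied by your outline: after reducing $Z$ to be central of $\{p,q\}'$-order, one shows (i) $\oh{q'}{G/Z}=1$ by Clifford descent, (ii) via $\oh{q}G>1$, induction on $G/\oh{q}G$, and Hall--Higman~1.2.3 that a Sylow $q$-subgroup $Q$ is normal in $G$, and (iii) by factoring out $\Phi(Q)$ that $Q$ is abelian. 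At that point It\^o's theorem forces $q\nmid\chi(1)$ for every $\chi\in\Irr(G|\lambda)$, delivering the $\{p,q\}'$-degree upgrade for free and allowing Theorem~\ref{lnpt} to fire (the $\AAA_7$ proviso is handled since $q$-solvability excludes $\AAA_7$ whenever $q\in\{2,3,5,7\}$, and otherwise $\{5,7\}\not\subseteq\{p,q\}$). Your minimal-normal-subgroup descent is a reasonable organizing device, but on its own it does not steer you toward a configuration with $Q$ normal abelian; the targeted reductions above are the missing idea, and It\^o is the mechanism that converts ``$\lambda$ extends to $Q$'' into ``$q\nmid\chi(1)/\lambda(1)$''.

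A secondary issue: your treatment of the case where $M/Z$ is a $q$-group invokes a ``coprime-action argument'' to find a $G$-invariant extension of $\lambda$ to $M$. This is clean only after $Z$ has been reduced to $q'$-order (which the paper accomplishes by splitting off the $p$- and $q$-parts of $\lambda$, using that $\lambda$ extends to both $P$ and $Q$). Once $|Z|$ is prime to $q$ one has $M=Z\times\oh{q}G$ and $\lambda\times 1_{\oh{q}G}$ is the canonical $G$-invariant extension, no coprime action needed; without that reduction, producing a $G$-invariant extension is not automatic.
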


\begin{proof}
Write $\pi=\{p,q\}$. We argue by induction on $|G/Z|$.

\medskip

\textit{Step 1: We may assume $Z\sbs \zent G$, $Z$ has $\pi'$-order and $\lambda$ is faithful. In particular, $G$ is not $p$-solvable.}

\medskip

By \cite[Corollary 5.9]{Nav18} there is an isomorphic character triple $(G^*, Z^*, \lambda^*)$ where $Z^*$ is central and $\lambda^*$ is faithful. Now if there is a Sylow $q$-subgroup $Q^*/Z^*$ normalizes a Sylow $p$-subgroup $P^*/Z^*$ of $G^*/Z^*$ then $Q/Z$ normalizes $P/Z$ via the isomorphism $G^*/Z^*\rightarrow G/Z$, so we may assume $Z$ is cyclic and central and $\lambda$ is faithful.

Now write $\lambda=\lambda_p\lambda_{p'}$ where $\lambda_p$ has $p$-power order and $\lambda_{p'}$ has order coprime to $p$. Let $\chi\in\Irr(G|\lambda)$ and $P/Z\in\Syl_p(G/Z)$. Since $\chi$ has $p'$-degree, $\chi_P$ contains a linear character by \cite[Theorem 5.12]{Nav18}, which must lie over $\lambda$. Therefore $\lambda$ extends to $P$. Since $\lambda_p$ is a power of $\lambda$ in the group $\Irr(Z)$, $\lambda_p$ also extends to $P$ and by \cite[Corollary 6.2]{Nav18}, $\lambda_p$ extends to $\hat\lambda_p\in\Irr(G)$. By \cite[Problem 5.2]{Nav18}, $(G,Z,\lambda)$ is isomorphic to $(G, Z,(\hat\lambda_p)_Z\lambda)=(G,Z,\lambda_{p'})$ so it is no loss to assume $\lambda$ has $p'$-order. Since $\lambda$ is faithful and $Z$ is central, this implies that $Z$ has $p'$-order. By writing $\lambda=\lambda_q\lambda_{q'}$ and arguing as before by for the prime $q$ we conclude that we can assume that $Z$ also has $q'$-order (recall that $\lambda$ extends to $Q$ by hypothesis).

Since $Z$ has $p'$-order it is clear that $\lambda$ is $p'$-special in the sense of \cite[Section 2]{Nav-Wol01}. Using \cite[Theorem 3.1]{Nav-Wol01} we may assume $G$ is not $p$-solvable.

\medskip

\textit{Step 2: We may assume $\oh{q'}{G/Z}=1$ and therefore $\oh{\pi'}{G/Z}=1$. In particular, $Z=\oh{q'}{G}=\oh{\pi'}G$.}

\medskip

Write $M/Z=\oh{q'}{G/Z}$ and assume $M>Z$.
By \cite[Lemma 2.1]{Nav-Wol01} (applied to $\pi=\{q\}'$), there is a $Q$-invariant $\mu\in\Irr(M|\lambda)$. Since $M$ is a $q'$-group, $\mu$ extends to $Q$ by \cite[Corollary 6.2]{Nav18}. Furthermore, $\Irr(G|\mu)\sbs\Irr(G|\lambda)$ and therefore every character in $\Irr(G|\mu)$ has $p'$-degree. It follows from the Clifford correspondence that $G_\mu$ contains a Sylow $p$-subgroup $P$ of $G$. By induction, there is some Sylow $q$-subgroup $Q_0\in\Syl_q(G)$ such that $Q_0M/M$ normalizes $PM/M$ and therefore $Q_0$ normalizes $PM$. Therefore $Q_0$ acts coprimely on $PM$ and by \cite[18.7(1)]{Asch}, $Q_0$ normalizes a Sylow $p$-subgroup of $PM$, which is a Sylow $p$-subgroup of $G$. Thus we may assume $M=Z$, and since $\oh{\pi'}{G/Z}\sbs\oh{q'}{G/Z}$, the claim of Step 2 follows.

\medskip

\textit{Step 3: We may assume $G$ has a normal Sylow $q$-subgroup $Q$.}

\medskip

Let $L=\oh{q}G$. We have that $L>1$ because $\oh{q'}G=Z$ is central. Since $ZL=Z\times L$ we have $\Irr(G|\lambda\times 1_L)\sbs\Irr(G|\lambda)$ can be identified with $\Irr(G/L|\overline{\lambda})$ (where $\overline{\lambda}\in\Irr(ZL/L)$ is the character naturally associated to $\lambda$ via the natural isomorphism $Z\cong ZL/L$). Now $ZQ=Z\times Q$ so $\lambda\times 1_Q\in\Irr(ZQ)$ extends $\lambda\times 1_L$ and therefore $\overline{\lambda}$ extends to $ZQ/L\in\Syl_q(G/L)$. By induction we have that there is $P\in\Syl_p(G)$ such that $PL/L$ is normalized by a Sylow $q$-subgroup of $G/L$. Write $R/L=\oh{q'}{G/L}$ and $T/R=\oh{q}{G/R}$. It follows that $T/R$ normalizes $PR/R\in\Syl_p(G/R)$ and therefore $PR/R\sbs\cent{G/R}{T/R}$. Since $\oh{q'}{G/R}=1$, Hall--Higman 1.2.3 \cite[Theorem 3.21]{Isa08} implies that $\cent{G/R}{T/R}\sbs T/R$, so $P\sbs R$.

 Assume $R<G$. Since every $\psi\in\Irr(R|\lambda)$ lies below some $\chi\in\Irr(G|\lambda)$ it follows that every $\psi\in\Irr(R|\lambda)$ has $p'$-degree. By induction, a Sylow $q$-subgroup $R/Z$ normalizes $P/Z$. In particular, $LZ/Z$ normalizes $P/Z$ and therefore $P/Z\sbs\cent{G/Z}{L/Z}\sbs LZ/Z$. It follows that $P=1$ and therefore $G$ is a $p'$-group and the result follows in this case. Therefore we may assume $R=G$ and then $G$ has a normal Sylow $q$-subgroup $Q$.
 
\medskip

\textit{Step 4: We may assume $Q$ is elementary abelian}

\medskip

Notice that $Z\Phi(Q)=Z\times\Phi(Q)$ and that $\Irr(G|\lambda\times 1_{\Phi(Q)})\sbs\Irr(G|\lambda)$. For any subgroup $X\leq G$ write $\overline{X}=X\Phi(Q)/\Phi(Q)$. We may identify $\Irr(G|\lambda\times 1_{\Phi(Q)})$ with $\Irr(\overline{G}|\overline{\lambda})$ where $\overline{\lambda}\in\Irr(\overline{Z})$ is the character canonically associated to $\lambda$ via the natural isomorphism $Z\cong \overline{Z}$, and therefore $p$ does not divide the degree of any character in $\Irr(\overline{G}|\overline{\lambda})$. Arguing as before, $\overline{\lambda}$ extends to $\overline{Q}$. If $\Phi(Q)>1$ then, by induction, $\overline{Q}\sbs\norm{\overline{G}}{\overline{P}}$ for some $P\in\Syl_p(G)$, which implies that $Q\sbs\Phi(Q)\norm G P$. If $R=Q\cap\norm G P\in\Syl_q(\norm G P)$ then $Q=\Phi(Q)R$ and by \cite[5.2.3]{Kur-Ste04} this implies $Q=R$, so $Q\sbs\norm G P$. Thus we may assume $\Phi(Q)=1$ so $Q$ is elementary abelian.

\medskip

\textit{Final step}

\medskip

By It\^o's theorem, $q$ does not divide the degree of any irreducible character in $\Irr(G|\lambda)$ and therefore every $\chi\in\Irr(G|\lambda)$ has $\pi'$-degree. Since $G$ is $q$-solvable, if $q$ is any prime divisor of $|\mathsf{A}_7|$ then $G$ does not involve $\mathsf{A}_7$. Therefore we are under the hypotheses of Theorem \ref{lnpt} and we have that $G/Z$ has abelian Hall $\pi$-subgroups, so $QZ/Z$ normalizes a Sylow $p$-subgroup of $G/Z$, as desired.
\end{proof}

\textit{Proof of Theorem B.} Argue by induction on $|G|$. We work to show that we may assume $B$ covers some $\lambda\in\Irr(\oh{q'}G)$ that is $G$-invariant. Assume otherwise and let $\theta\in\Irr(\oh{q'}G)$ be $(B,Q)$-good for a defect group $Q$ of $B$, in the sense of \cite[Proposition 1.3]{Mar-Ros23}. Let $U, \nu$ be as in \cite[Proposition 1.3]{Mar-Ros23}. Then induction defines a bijection $\Irr(U|\nu)\rightarrow\Irr(B)$ and by \cite[Theorem 10.20]{Nav98} there is a block $B_U$ of $U$ with $\Irr(U|\nu)=\Irr(B_U)$ and the defect groups of $B_U$ are defect groups of $B$ (again by \cite[Propostion 1.3]{Mar-Ros23}). Since $\Irr(B)=\{\psi^G\mid\psi\in\Irr(B_U)\}$ and every character in $\Irr(B)$ has $p'$-degree, it is straightforward that every character in $\Irr(B_U)$ has degree coprime to $p$, and that $U$ contains a Sylow $p$-subgroup $P$ of $G$. By induction, some defect group of $B_U$ normalizes $P$. Since the defect groups of $B_U$ are defect groups of $B$, we are done in this case.

Therefore we may assume that there is a $G$-invariant $\lambda\in\Irr(\oh{q'}G)$ covered by $B$, so by \cite[Theorem 10.20]{Nav98} $B$ has maximal defect and $\Irr(B)=\Irr(G|\lambda)$. Notice that by \cite[Corollary 6.2]{Nav18}, $\lambda$ extends to $Q$ where $Q/\oh{q'}G\in\Syl_q(G/\oh{q'}G)$. By Theorem \ref{thm:stronger result 2} we obtain that $Q/\oh{q'}G$ normalizes a Sylow $p$-subgroup $P/\oh{q'}G$ of $G/\oh{q'}G$. If $R$ is a Sylow $q$-subgroup of $Q$ then $R$ normalizes $P/\oh{q'}G$. Now $R$ is a $q$-group acting coprimely on $P$ so it normalizes a Sylow $p$-subgroup of $P$ by \cite[18.7(1)]{Asch}. Since $P$ contains a Sylow $p$-subgroup of $G$, we are done.
\qed

\bigskip

It remains open to characterize group theoretically when $p$ does not divide the degrees of all (or only all height zero) characters in a $q$-block. The main result of \cite{Gia-Mal-Val19} characterizes finite groups whose principal $q$-block (for odd $q$) contains no characters of even degree, modulo the sporadic group $M_{22}$ not appearing as a composition factor if $q=7$. 
It has been conjectured recently that for all primes $p\neq q$ dividing $|G|$, $p$ does not divide the degrees of the height zero characters in the principal $q$-block if and only if $G$ has a normal Sylow $q$-subgroup \cite[Conjecture 6.1]{Mor-Sch24}. 
For arbitrary blocks, it is harder to make predictions.

\medskip

\bibliographystyle{alpha}

\begin{thebibliography}{BNRS22}

\bibitem[Asc86]{Asch}
M.~G. Aschbacher, {\it Finite group theory}, Cambridge Studies in Advanced Mathematics, 10, Cambridge Univ. Press, Cambridge, 1986.

\bibitem[BN00]{Bel-Nav00}
A.~Beltr\'{a}n and G.~Navarro.
\newblock Sylow normalizers and {B}rauer character degrees.
\newblock {\em J. Algebra}, 229(2):623--631, 2000.

\bibitem[BNRS22]{Bon-Nav-Riz-San22}
L.~Bonazzi, G.~Navarro, N.~Rizo, and L.~Sanus.
\newblock Brauer character degrees and {S}ylow normalizers.
\newblock {\em Ann. Mat. Pura Appl. (4)}, 201(6):2575--2582, 2022.

\bibitem[CS24]{Cab-Spa}
M.~Cabanes and B.~Sp{\"a}th.
\newblock {T}he {M}c{K}ay conjecture con character degrees.
\newblock \href{https://www.arxiv.org/abs/2410.20392}{arXiv:2410.20392},
\newblock 2024.

\bibitem[Gec20]{Gec20}
M.~Geck.
\newblock Green functions and {G}lauberman degree-divisibility.
\newblock {\em Ann. of Math. (2)}, 192(1):229--249, 2020.

\bibitem[GMV19]{Gia-Mal-Val19}
E.~Giannelli, G.~Malle and C.~Vallejo.
\newblock {E}ven degree characters in principal blocks.
\newblock {\em J. Pure Appl. Algebra}, 223:900--907, 2019.

\bibitem[GM24]{Gia-Mec24}
E.~Giannelli and N.~Mecacci.
\newblock{Even degree characters in blocks of symmetric and alternating groups}.
\newblock {\em submitted}, 2024.


\bibitem[HT94]{Har-Tur94}
B.~Hartley and A.~Turull.
\newblock On characters of coprime operator groups and the {G}lauberman
  character correspondence.
\newblock {\em J. Reine Angew. Math.}, 451:175--219, 1994.

\bibitem[Isa08]{Isa08}
I.~M. Isaacs, {\it Finite group theory}, Graduate Studies in Mathematics, 92, Amer. Math. Soc., Providence, RI, 2008.

\bibitem[IMN07]{Isa-Mal-Nav07}
I.~M. Isaacs, G.~Malle, and G.~Navarro.
\newblock A reduction theorem for the {M}c{K}ay conjecture.
\newblock {\em Invent. Math.}, 170(1):33--101, 2007.

\bibitem[KS04]{Kur-Ste04}
H. Kurzweil and B. Stellmacher, {\it The theory of finite groups}, translated from the 1998 German original, 
Universitext, Springer, New York, 2004

\bibitem[LNPT24]{LNT}
M.~W. Liebeck, G.~Navarro, C.~E. Praeger, and P.~H. Tiep.
\newblock Characters of relative $\pi'$-degree over normal subgroups.
\newblock {\em submitted}, 2024.

\bibitem[LWWZ24]{Wil}
Y.~Liu, L.~Wang, W.~Willems and J.~Zhang.
\newblock Brauer's height zero conjecture for two primes holds true.
\newblock {\em Math. Ann.} 388:1677--1690, 2024.

\bibitem[MN20]{Mal-Nav20}
G.~Malle and G.~Navarro.
\newblock  Brauer's height zero conjecture for two primes.
\newblock {\em Math. Z.}, 295:1723--1732, 2020.

\bibitem[MR23]{Mar-Ros23}
J.~M. Mart\'{\i}nez and D.~Rossi.
\newblock Degree divisibility in {A}lperin-{M}c{K}ay correspondences.
\newblock {\em J. Pure Appl. Algebra}, 227(12):Paper No. 107449, 2023.

\bibitem[MS24]{Mor-Sch24}
A.~Moret\'o and A.~A. Schaeffer~Fry
\newblock A normal version of Brauer's height zero conjecture.
\newblock \href{https://arxiv.org/abs/2406.06428}{arXiv:2406.06428},
\newblock 2024.

\bibitem[Nav98]{Nav98}
G.~Navarro.
\newblock {\em Characters and blocks of finite groups}, volume 250 of {\em
  London Mathematical Society Lecture Note Series}.
\newblock Cambridge University Press, Cambridge, 1998.

\bibitem[Nav18]{Nav18}
G.~Navarro.
\newblock {\em Character theory and the {M}c{K}ay conjecture}, volume 175 of
  {\em Cambridge Studies in Advanced Mathematics}.
\newblock Cambridge University Press, Cambridge, 2018.

\bibitem[NRS22]{Nav-Riz-San22}
G.~Navarro, N.~Rizo, and L.~Sanus.
\newblock Character degrees in separable groups.
\newblock {\em Proc. Amer. Math. Soc.}, 150(6):2323--2329, 2022.

\bibitem[NT]{Nav-Tie}
G.~Navarro, P.~H.~Tiep.
\newblock Characters, degrees, and sets of primes.
\newblock {\em in preparation}.

\bibitem[NW98]{Nav-Wol98}
G.~Navarro and T.~R. Wolf.
\newblock Character degrees and local subgroups of {$\pi$}-separable groups.
\newblock {\em Proc. Amer. Math. Soc.}, 126(9):2599--2605, 1998.

\bibitem[NW01]{Nav-Wol01}
G.~Navarro and T.~R. Wolf.
\newblock Character degrees and blocks of finite groups.
\newblock {\em J. Reine Angew. Math.}, 531:141--146, 2001.

\bibitem[Riz19]{Riz19}
N.~Rizo.
\newblock Divisibility of degrees in {M}c{K}ay correspondences.
\newblock {\em Arch. Math. (Basel)}, 112(1):5--11, 2019.

\bibitem[Tur07]{Tur07}
A.~Turull.
\newblock Degree divisibility in character correspondences.
\newblock {\em J. Algebra}, 307(1):300--305, 2007.

\end{thebibliography}

{\sc Departament de Matem\`atiques, Universitat de Val\`encia, 46100
Burjassot, Val\`encia.}

\textit{Email address:} \href{mailto:josep.m.martinez@uv.es}{\texttt{josep.m.martinez@uv.es}}

\end{document}